\theoremstyle{definition} 
\newtheorem{definition}{Definition}[section]
\theoremstyle{plain}
\newtheorem{theorem}[definition]{Theorem}
\newtheorem{proposition}[definition]{Proposition}
\newtheorem{remark}[definition]{Remark}
\newtheorem{lemma}[definition]{Lemma}
 \title{Duality between Bott--Chern and Aeppli Cohomology on Non-Compact Complex Manifolds} 
\author{Xiaojun Wu} 
\date{\today} 
\begin{document} 
\def\A{\mathcal{A}}
\def\cI{\mathcal{I}}
\def\Z{\mathbb{Z}}
\def\Q{\mathbb{Q}}  \def\C{\mathbb{C}}
 \def\R{\mathbb{R}}
 \def\N{\mathbb{N}}
 \def\H{\mathbb{H}}
  \def\P{\mathbb{P}}
 \def\rC{\mathrm{C}}
  \def\d{\partial}
 \def\dbar{{\overline{\partial}}}
\def\dzbar{{\overline{dz}}}
\def \ddbar {\partial \overline{\partial}}
\def\cB{\mathcal{B}}
\def\cD{\mathcal{D}}  \def\cO{\mathcal{O}}
\def\cbarO{\overline{\mathcal{O}}}
\def\D{\mathcal{D}}
\def\cC{\mathcal{C}}
\def\cF{\mathcal{F}}
\def \rank{\mathrm{rank}}
\def \deg{\mathrm{deg}}
\def \tot{\mathrm{Tot}}
\def \id{\mathrm{id}}
\bibliographystyle{plain}
\def \End{\mathrm{End}}
\def \dim{\mathrm{dim}}
\def \div{\mathrm{div}}
\def \ker{\mathrm{Ker}}
\def \im{\mathrm{Im}}
\def \rC{\mathrm{Cone}}
\newcommand{\Ub}{\mathcal{U}}
\newcommand{\dcech}{\check{\delta}}
\newcommand{\dcechg}{\delta\!\!\!\check{\delta}}
\newcommand{\lc}{\mathcal{L}}
\newcommand{\ec}{\mathcal{E}}
\maketitle 
\begin{abstract}In this paper we establish duality theorems relating Bott–Chern and Aeppli cohomology, both with and without compact support, on non-compact complex manifolds under suitable pseudoconvexity assumptions. In particular, on Stein manifolds we obtain a full Bott–Chern–Aeppli duality extending Serre duality for Dolbeault cohomology. We also show that these results fail in general without pseudoconvexity assumptions by constructing explicit counterexamples on non-compact complex surfaces. \end{abstract}
\medskip \noindent\textbf{2020 Mathematics Subject Classification.} Primary 32C37; Secondary 32C35, 32L10. \smallskip \noindent
\\
\textbf{Key words and phrases.} Serre duality, Bott--Chern cohomology, Aeppli cohomology, Fr\'echet spaces.
\section{Introduction}
Cohomological duality plays a central role in geometry, as it provides an effective framework for computing and relating cohomology groups associated with a given manifold. Conversely, the failure of a duality often reflects intrinsic geometric or analytic features of the underlying space. A classical example is provided by Hodge duality: its failure implies that a compact complex manifold does not admit a K\"ahler structure. 

On smooth manifolds, a natural duality is exemplified by Poincar\'e duality, which relates cohomology groups in complementary degrees. When the underlying manifold is non-compact, however, such duality statements become more subtle and generally fail without additional assumptions. This naturally leads to the introduction of cohomology theories with compact support, which are designed to recover meaningful duality statements in the non-compact setting. 

On a complex manifold, de~Rham cohomology admits a refinement through Dolbeault cohomology. The Dolbeault cohomology groups $H^{p,q}_{\bar\partial}(X)$ encode the complex structure of the manifold and, when the $n$-dimensional complex manifold $X$ is compact, fit into Serre duality, yielding a perfect pairing between $H^{p,q}_{\bar\partial}(X)$ and $H^{n-p,n-q}_{\bar\partial}(X)$. In the non-compact case, the situation is more delicate. As shown by Serre~\cite[Section~14]{Ser55}, Serre duality may fail on general non-compact complex manifolds, even when cohomology with compact support is taken into account. In fact, Serre proved that duality holds only under additional functional-analytic assumptions.

Bott--Chern and Aeppli cohomologies provide refinements of Dolbeault cohomology that capture finer analytic information on complex manifolds. For completeness, we briefly recall their definitions (see, for example, \cite{Aep62,BC65,agbook,Sch07}). 
\begin{definition}\label{BC} Let $X$ be a complex manifold. The \emph{Bott--Chern cohomology} of $X$ is defined by \[ H^{p,q}_{BC}(X) := \frac{ \ker \partial \cap \ker \bar\partial \subset \mathcal A^{p,q}(X)} { \operatorname{Im}\,\partial\bar\partial \subset \mathcal A^{p,q}(X)}. \] The \emph{Bott--Chern cohomology with compact support} is defined by \[ H^{p,q}_{BC,c}(X) := \frac{ \ker \partial \cap \ker \bar\partial \subset \mathcal A^{p,q}_c(X)} { \operatorname{Im}\,\partial\bar\partial \subset \mathcal A^{p,q}_c(X)}, \] where $\mathcal A^{p,q}_c(X)$ denotes the space of smooth $(p,q)$-forms on $X$ with compact support. \end{definition}
 \begin{definition} \label{Ae} Let $X$ be a complex manifold. The \emph{Aeppli cohomology} of $X$ is defined by \[ H^{p,q}_{A}(X) := \frac{ \ker \partial\bar\partial \subset \mathcal A^{p,q}(X)} { \operatorname{Im}\,\partial + \operatorname{Im}\,\bar\partial }. \] The \emph{Aeppli cohomology with compact support} is defined by \[ H^{p,q}_{A,c}(X) := \frac{ \ker \partial\bar\partial \subset \mathcal A^{p,q}_c(X)} { \operatorname{Im}\,\partial + \operatorname{Im}\,\bar\partial }. \] \end{definition} 
 
On compact complex manifolds, Bott--Chern and Aeppli cohomology groups are finite dimensional and are in perfect duality (see, for example, \cite[Page~16]{Sch07}), thereby generalizing Serre duality in the Dolbeault setting. In contrast, the non-compact case is far less understood. While Dolbeault cohomology with compact support provides a natural framework for extending Serre duality, it is not a priori clear whether an analogous duality holds for Bott--Chern and Aeppli cohomology, either with or without compact support. Since Bott--Chern and Aeppli cohomologies encode finer analytic information than Dolbeault cohomology, understanding their duality properties on non-compact complex manifolds is a natural and subtle problem. 

In this paper, we address this question under suitable pseudoconvexity and finiteness assumptions. Using the Andreotti--Grauert finiteness theorem~\cite{AG62} together with Serre's functional-analytic criterion for duality~\cite{Ser55}, we establish duality results (see Theorem \ref{thm1}, \ref{thm2}) between Bott--Chern and Aeppli cohomology groups on strongly $q$-convex complex manifolds (see Definition \ref{qconvex}). We also observe that the expected duality holds for theta toroidal groups. By revisiting the cohomological computations in Serre~\cite[Section~14]{Ser55}, we show that, in general, the expected Bott--Chern--Aeppli duality fails on non-compact complex manifolds when the above pseudoconvexity and finiteness assumptions are not satisfied. Such an example already occurs in complex dimension~$2$. By contrast, such a failure cannot occur in complex dimension~$1$. Indeed, by the Behnke--Stein theorem, any non-compact Riemann surface is Stein, and our main theorem shows that the relevant duality holds on Stein manifolds. 

The paper is organized as follows. In Section~2 we recall the basic notions and classical results that will be used later. Section~3 is devoted to the proof of the duality results under pseudoconvexity assumptions. In Section~4 we construct explicit counterexamples illustrating the failure of Bott--Chern--Aeppli duality in the absence of such assumptions.
\paragraph{}
\textbf{Acknowledgements.} I would like to thank my postdoctoral advisor, Professor Takayuki Koike, and Dr.~Jinichiro Tanaka for several helpful and stimulating discussions related to this work. This research was supported by the JSPS Postdoctoral Fellowships for Research in Japan (Standard). I am also grateful to Osaka Metropolitan University and the Osaka Central Advanced Mathematical Institute (OCAMI) for providing an excellent research environment.
\section{Andreotti--Grauert finiteness, Serre’s criterion, and sheaf-theoretic descriptions} 
In this section, we collect some well-known results that will be used in later sections.

The following finiteness theorem is due to Andreotti and Grauert~\cite{AG62}. Throughout, we use the notation of Demailly~\cite[Chap.~IX]{agbook}.
\begin{definition}
\label{qconvex}
 A smooth function on a complex manifold $X$ of dimension $n$ is said to be \emph{strongly $q$-convex} at a point $x \in X$ if $\sqrt{-1}\partial\bar\partial\psi(x)$ has at least $n-q+1$ strictly positive eigenvalues, or equivalently, if there exists an $(n-q+1)$-dimensional subspace $F \subset T_xX$ on which the complex Hessian is positive definite. A complex manifold $X$ is said to be \emph{strongly $q$-convex} if it admits a smooth exhaustion function $\psi$ which is strongly $q$-convex outside an exceptional compact set $K \subset X$. We say that $X$ is \emph{strongly $q$-complete} if $\psi$ can be chosen such that $K=\varnothing$. \end{definition} 
\begin{theorem}[{\cite[Chap.~IX, Th.~4.10]{agbook}}]
\label{AG} 
Let $\mathcal F$ be a coherent analytic sheaf over a strongly $q$-convex complex manifold $X$. Then $H^k(X,\mathcal F)$ is Hausdorff and finite dimensional for all $k \ge q$. \end{theorem}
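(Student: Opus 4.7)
The plan is to follow the classical Andreotti--Grauert strategy: reduce the finiteness statement to a compact perturbation argument in Fr\'echet spaces via L.~Schwartz's theorem, with a \v{C}ech-theoretic setup and the bumping technique supplying the main inputs. Since $\cF$ is coherent, higher cohomology over Stein opens vanishes (Cartan's Theorem~B), so I would compute $H^k(X,\cF)$ as \v{C}ech cohomology for a locally finite Stein covering $\Ub$. Each cochain space $C^k(\Ub,\cF)$ carries a natural Fr\'echet topology (uniform convergence on compact subsets of the sections), and the coboundary $\dcech$ is continuous with respect to it.

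The analytic heart of the proof exploits the exhaustion by the sublevel sets $X_c := \{\psi < c\}$ for $c > \sup_K \psi$, which are strongly $q$-convex with empty exceptional set. For $c' > c$ both sufficiently large, I would take two Stein coverings $\Ub = \{U_\alpha\}$ of $X_c$ and $\mathcal V = \{V_\alpha\}$ of $X_{c'}$ with $U_\alpha \Subset V_\alpha$, so that the restriction morphism $r : C^\bullet(\mathcal V, \cF) \to C^\bullet(\Ub, \cF)$ is \emph{compact} as a map of Fr\'echet spaces (by Montel's theorem, as restriction to a relatively compact open subset is compact on spaces of holomorphic sections). The key geometric input is the bumping lemma: one can interpolate between $X_c$ and $X_{c'}$ by a finite sequence of elementary strongly $q$-convex bumps, on each of which cohomology with values in $\cF$ vanishes in degrees $\ge q$; this local vanishing follows from H\"ormander-type $L^2$ estimates for $\dbar$ on strongly $q$-convex model tubes. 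Chaining the bumps yields that $r$ induces an isomorphism on cohomology in degrees $\ge q$.

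One then invokes L.~Schwartz's theorem on compact perturbations: if $u : E \to F$ is a continuous linear map between Fr\'echet spaces, $v : E \to F$ is compact, and $u + v$ has closed range of finite codimension, then so does $u$. Applied at the $k$-th cochain level, with $u$ built from $\dcech$ and $v$ from the compact restriction $r$, the isomorphism produced by bumping supplies the hypothesis on $u+v$, and one deduces that $H^k(X_c, \cF)$ is Hausdorff and finite-dimensional for $k \ge q$. Finally, letting $c \to \infty$, the restriction maps in the inverse system $\{H^k(X_c, \cF)\}$ are eventually isomorphisms (again by bumping), so $H^k(X, \cF)$ coincides with some $H^k(X_c, \cF)$ and is itself Hausdorff and finite-dimensional.

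The principal obstacle is the bumping step: verifying that an elementary $q$-convex extension preserves cohomology in degrees $\ge q$ requires explicit local $\dbar$-solvability on strongly $q$-convex model tubes, together with delicate bookkeeping of how cocycles extend across each bump. The remaining ingredients---the \v{C}ech reduction, the topology on cochain spaces, and the Schwartz perturbation argument---are essentially formal functional analysis.
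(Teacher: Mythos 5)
The paper does not prove this statement: it is quoted verbatim as a classical result with a citation to Demailly's book (Chap.~IX, Th.~4.10), so there is no internal proof to compare against. Your outline is a faithful reconstruction of the standard Andreotti--Grauert argument as presented in that reference --- \v{C}ech cohomology on Stein covers of the sublevel sets $X_c$, Montel compactness of the restriction $C^\bullet(\mathcal V,\mathcal F)\to C^\bullet(\mathcal U,\mathcal F)$, the bumping lemma, and L.~Schwartz's compact-perturbation theorem --- so in that sense you are following the same route as the cited source. Two points in your sketch are imprecise, though. First, for a general coherent $\mathcal F$ the vanishing on elementary bumps is not obtained directly from H\"ormander-type $L^2$ estimates for $\bar\partial$ (that works only for locally free sheaves); one must pass through local finite free resolutions and a descending syzygy induction, or an equivalent device. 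Second, the final passage $c\to\infty$ is not ``again by bumping'' alone: to identify $H^k(X,\mathcal F)$ with $\varprojlim_c H^k(X_c,\mathcal F)$ one must kill the $\varprojlim^1$ term in degree $k-1$, and in the critical case $k=q$ this requires a Runge-type approximation theorem for sections over the exhaustion (the Mittag-Leffler condition in degree $q-1$), which is a genuinely separate analytic input in the original proof. Neither issue invalidates the skeleton, but both would have to be supplied to make the argument complete.
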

The criterion allowing one to pass from duality at the level of global forms to duality at the level of cohomology is given by the following classical result of Serre.
Recall that a \emph{homomorphism} between locally convex topological vector spaces is a continuous linear map $u : X \to Y$ such that the induced map $u : X \to \operatorname{Im} u$, where $\operatorname{Im} u$ is endowed with the subspace topology induced by $Y$, is an open mapping.
 \begin{proposition}[{\cite[Lemma~1]{Ser55}}]
\label{Ser1}  
Let $L,M,N$ be Fr\'echet spaces, and let \[ L \xrightarrow{u} M \xrightarrow{v} N \] be two linear continuous homomorphisms such that $v \circ u = 0$. Let $L^*,M^*,N^*$ be the topological duals, and let ${}^tu : M^* \to L^*$ and ${}^tv : N^* \to M^*$ denote the transpose maps. Set \[ C = v^{-1}(0), \quad B = u(L), \quad H = C/B, \] \[ C' = ({}^tv)^{-1}(0), \quad B' = {}^tu(N^*), \quad H' = C'/B'. \] Then $H$ is a Fr\'echet space whose topological dual is isomorphic to $H'$. \end{proposition}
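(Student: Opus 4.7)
The plan is to treat the statement as an abstract duality theorem in which the Hahn--Banach theorem on Fr\'echet spaces does all the analytic work, provided one knows two closedness facts: that $u(L)$ is closed in $M$ and $v(M)$ is closed in $N$. These are the only places where the ``homomorphism'' hypothesis (as opposed to mere continuity) enters crucially. I will use the natural interpretation $C' = \ker({}^tu) \subset M^*$ and $B' = \operatorname{Im}({}^tv) \subset M^*$, so that the dual complex reads $L^* \xleftarrow{{}^tu} M^* \xleftarrow{{}^tv} N^*$ and $B' \subset C'$ follows from $v\circ u=0$.

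First I would verify that $H = C/B$ is a Fr\'echet space. Since $v$ is continuous, $C = v^{-1}(0)$ is closed in $M$, hence Fr\'echet. The key point is that $B = u(L)$ is closed in $M$: because $u$ is a homomorphism, the induced bijection $L/\ker u \to u(L)$ is a topological isomorphism when the target carries the subspace topology from $M$. Since $L/\ker u$ is Fr\'echet, hence complete, $B$ is complete in the subspace topology and therefore closed in the Hausdorff space $M$. An identical argument, applied to $v$, shows that $v(M)$ is closed in $N$. Consequently $B$ is closed in $C$, and $H = C/B$ is a quotient of a Fr\'echet space by a closed subspace, which is again Fr\'echet.

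Next I would construct the duality map. Any $\phi \in M^*$ restricts to $\phi|_C \in C^*$, and $\phi|_B = 0$ precisely when ${}^tu(\phi) = 0$, i.e.\ $\phi \in C'$. For such $\phi$, the restriction descends to $H = C/B$, yielding a linear map $\alpha : C' \to H^*$. The kernel of $\alpha$ consists of those $\phi \in C'$ with $\phi|_C = 0$, i.e.\ $\phi \in C^{\perp} \cap M^*$. The inclusion $B' \subset C^{\perp}$ is immediate from the definition of $C$. For the reverse inclusion, if $\phi \in M^*$ vanishes on $\ker v$, it factors continuously through the quotient $M/\ker v$, which is topologically isomorphic to $v(M)$ by the homomorphism hypothesis; since $v(M)$ is closed in $N$, Hahn--Banach extends the induced functional to some $\psi \in N^*$, giving $\phi = {}^tv(\psi) \in B'$. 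Thus $\ker \alpha = B'$, and $\alpha$ descends to an injection $\bar\alpha : H' \to H^*$.

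For surjectivity of $\bar\alpha$, given $\chi \in H^*$ I lift it to $\tilde\chi \in C^*$ (trivially), extend by Hahn--Banach to $\phi \in M^*$, observe that $\phi$ automatically vanishes on $B$ so that $\phi \in C'$, and conclude $\alpha(\phi) = \chi$ by construction. The main obstacle throughout is the twin closedness statements ``$u(L)$ closed in $M$'' and ``$v(M)$ closed in $N$'', which fail in general for merely continuous linear maps between Fr\'echet spaces; once they are available, the open mapping theorem (implicit in the homomorphism hypothesis) together with Hahn--Banach delivers both the Fr\'echet structure on $H$ and the bijection $H' \cong H^*$.
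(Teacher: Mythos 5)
The paper itself offers no proof of this proposition: it is quoted (with a misprint) from Serre's \emph{Un th\'eor\`eme de dualit\'e}, so there is no in-paper argument to compare against. Your proof is correct and is essentially a faithful reconstruction of Serre's original Lemme~1. You rightly repair the typo in the statement --- as written, $({}^tv)^{-1}(0)$ lives in $N^*$ and ${}^tu(N^*)$ does not typecheck; the intended objects are $C'=\ker({}^tu)\subset M^*$ and $B'=\operatorname{Im}({}^tv)\subset M^*$ --- and you correctly isolate the two points where the homomorphism hypothesis (rather than mere continuity) is indispensable: the closedness of $u(L)$, which makes $H=C/B$ Fr\'echet via completeness of $L/\ker u$ transported to the subspace topology, and the identification $B'=C^{\perp}$. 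The Hahn--Banach extensions are legitimate since Fr\'echet spaces are locally convex, and surjectivity of $\bar\alpha$ follows as you say. One small refinement: in proving $C^{\perp}\subset B'$, what you actually use is not the closedness of $v(M)$ in $N$ but the fact that $M/\ker v\to v(M)$ is a topological isomorphism onto $v(M)$ with its subspace topology, so that the functional induced by $\phi$ is continuous for that topology and Hahn--Banach applies; closedness of $v(M)$ is a byproduct of the same hypothesis but plays no role in the extension step. Finally, the isomorphism you obtain is an algebraic identification of $H^*$ with $H'$, which is exactly what Serre asserts and what the rest of the paper needs.
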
 \begin{proposition}[{\cite[Lemma~2]{Ser55}}]
\label{Ser2} 
Let $u$ be a continuous linear map from a Fr\'echet space $L$ into a Fr\'echet space $M$. If $u(L)$ is a subspace of $M$ of finite codimension, then $u$ is a homomorphism. \end{proposition}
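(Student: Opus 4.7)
The plan is to reduce the statement to the Banach open mapping theorem for Fréchet spaces. First I would choose a finite-dimensional algebraic complement $F$ of $u(L)$ in $M$, so that $M = u(L) \oplus F$ as vector spaces; such an $F$ exists precisely because $u(L)$ has finite codimension. Since $F$ is finite-dimensional and $M$ is Hausdorff, $F$ is automatically closed in $M$, and the product $L \oplus F$ inherits a natural Fréchet space structure.

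Next I would pass to a bijection by quotienting out the kernel. Set $\tilde L := L/\ker u$, which is again a Fréchet space since $\ker u$ is closed. The induced continuous linear map
\[
\tilde\phi : \tilde L \oplus F \longrightarrow M, \qquad (\tilde\ell, f) \longmapsto u(\ell) + f,
\]
is surjective by the choice of $F$ and injective because $u(L) \cap F = 0$. Applying the Banach open mapping theorem to $\tilde\phi$, I would conclude that $\tilde\phi$ is a homeomorphism between Fréchet spaces. In particular $u(L)$ is closed in $M$, being the image of the closed subspace $\tilde L \oplus 0$.

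Finally, I would extract the openness of $u : L \to u(L)$ from this. The restriction of $\tilde\phi$ to $\tilde L \oplus 0$ identifies $\tilde L$ with $u(L)$, where the latter carries the subspace topology from $M$; since $\tilde\phi$ is a homeomorphism, this restriction is a topological embedding. Precomposing with the canonical (open) projection $L \twoheadrightarrow \tilde L$ shows that $u : L \to u(L)$ is open, that is, $u$ is a homomorphism in the sense of Serre.

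The argument is essentially the open mapping theorem together with routine bookkeeping of topologies; the only delicate point is noticing that the finite-codimension hypothesis is exactly what permits enlarging $L$ by a finite-dimensional, and hence closed, complement so that the resulting map becomes a continuous bijection of Fréchet spaces. I do not expect any serious obstacle beyond carefully tracking these identifications.
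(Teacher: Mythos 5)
Your argument is correct and is essentially the classical proof of Serre's Lemma~2, which the paper itself only cites from \cite{Ser55} without reproducing: one adjoins a finite-dimensional (hence closed) complement $F$, applies the open mapping theorem to the resulting continuous linear bijection $\tilde L \oplus F \to M$, and reads off both the closedness of $u(L)$ and the openness of $u$ onto its image. The only cosmetic difference from Serre's original write-up is that you first quotient by $\ker u$ to obtain a bijection, whereas one may equally apply the open mapping theorem directly to the surjection $L \oplus F \to M$; both routes are standard and correct.
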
 
We now recall the sheaf-theoretic interpretation of Bott--Chern cohomology due to Demailly (see~\cite[Chap.~VI, \S12.1]{agbook},\cite{Sch07}).
This sheaf-theoretic description provides a natural framework for relating Bott--Chern and Aeppli cohomology to Dolbeault cohomology and de~Rham cohomology.
 \begin{proposition}
 \label{DemSch}
The Bott--Chern cohomology group defined in Definition \ref{BC} is the hypercohomology group \[ H^{p,q}_{BC}(X) = \mathbb H^{p+q}(X,\mathcal B^\bullet_{p,q}) \] of the Bott--Chern complex \[ \mathcal B^\bullet_{p,q} : (2\pi \sqrt{-1})^p\mathbb C \xrightarrow{\Delta} \mathcal O \oplus \mathcal O \to \Omega^1 \oplus \Omega^1 \to \cdots \to \Omega^{p-1} \oplus \Omega^{p-1} \to \Omega^p \to \cdots \to \Omega^{q-1} \to 0, \] where $\Delta$ is multiplication by $1$ on the first component and by $-1$ on the second component. 

Moreover, the Aeppli cohomology group defined in Definition \ref{Ae} is the hypercohomology group
\[ H^{p,q}_A(X) = \mathbb H^{p+q+1}(X,\mathcal B^\bullet_{p+1,q+1}). \]  
 \end{proposition}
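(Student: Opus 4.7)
The plan is to follow the standard approach of Demailly and Schweitzer: construct an explicit fine resolution of $\cB^\bullet_{p,q}$ and identify the cohomology of the total complex of its global sections with the form-level definition of $H^{p,q}_{BC}(X)$.

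First I would verify that $\cB^\bullet_{p,q}$ is locally exact except in a single cohomological degree. The holomorphic Poincar\'e lemma shows that each strand $\mathcal{O} \to \Omega^1 \to \Omega^2 \to \cdots$ is a resolution of the constant sheaf $\mathbb C$, and the local $\ddbar$-lemma on a polydisk identifies the stalk cohomology at the critical spot with germs of $(p,q)$-forms $\alpha$ satisfying $\d\alpha = \dbar\alpha = 0$ modulo $\ddbar$-exact forms. The fact that the initial term is $(2\pi \sqrt{-1})^p \mathbb C$ embedded diagonally by $\Delta$ with opposite signs corresponds precisely to the kernel condition that pairs the holomorphic and antiholomorphic constants up to the normalizing factor.

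Next I would produce a fine resolution by replacing each holomorphic (respectively antiholomorphic) term appearing in $\cB^\bullet_{p,q}$ by its Dolbeault (respectively conjugate Dolbeault) resolution, thereby assembling the whole complex into a double complex $\lc^{\bullet,\bullet}$ of smooth forms. Because each $\A^{r,s}$ is fine and hence acyclic, the hypercohomology $\H^{p+q}(X,\cB^\bullet_{p,q})$ equals the cohomology of $\Gamma(X,\tot \lc^{\bullet,\bullet})$ in total degree $p+q$. A direct cycle-and-boundary inspection shows that every such class is represented by a global smooth $(p,q)$-form $\alpha$ with $\d\alpha = 0 = \dbar\alpha$, modulo $\ddbar$-exact forms, which is exactly $H^{p,q}_{BC}(X)$. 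The Aeppli identification is then obtained by the same argument applied to $\cB^\bullet_{p+1,q+1}$ in total degree $p+q+1$: dropping one cohomological level below the critical spot picks up a form $\alpha$ satisfying only $\ddbar \alpha = 0$, taken modulo $\im \d + \im \dbar$, which is exactly $H^{p,q}_{A}(X)$.

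The main obstacle will be the careful bookkeeping of signs in the differentials of $\cB^\bullet_{p,q}$, in particular the $(+1,-1)$ convention of $\Delta$, which propagates through the double complex and must be tracked so that the cocycle condition at the top level yields $\d\alpha = 0$ and $\dbar\alpha = 0$ separately, rather than only $d\alpha = 0$ (which would instead compute de Rham cohomology). Once the sign conventions are sorted out, the remaining work is a routine diagram chase comparing cycle and coboundary spaces in the total complex.
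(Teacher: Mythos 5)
Your proposal is the standard Demailly--Schweitzer argument, which is exactly what the paper is invoking: the paper gives no proof of this proposition, citing \cite{agbook} and \cite{Sch07}, and the fine complex $\mathcal L^\bullet_{p,q}$ recorded immediately after the statement is precisely the resolution your double complex assembles, so your overall strategy and the identification of cycles and boundaries in the critical total degree are correct. The one inaccuracy is your opening step: $\mathcal B^\bullet_{p,q}$ is \emph{not} locally exact outside a single cohomological degree (already for $(p,q)=(2,1)$ the complex $\mathbb C\to\mathcal O\oplus\overline{\mathcal O}\to\Omega^1\to 0$ has nonzero local cohomology sheaves in degrees $1$ and $2$); what your argument actually needs, and all that the subsequent steps use, is that the natural comparison map from $\mathcal B^\bullet_{p,q}$ to the (suitably shifted) smooth-form complex is a quasi-isomorphism, verified on local cohomology sheaves in \emph{every} degree by means of the holomorphic Poincar\'e lemma, the Dolbeault--Grothendieck lemma, and the local $\partial\bar\partial$-lemma.
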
 
The second statement was observed by Schweitzer~\cite[Section 4.d]{Sch07}.  
 
The Bott--Chern complex is quasi-isomorphic to the complex $\mathcal L^\bullet_{p,q}$, defined as in \cite[Chap.~VI, \S12.1]{agbook} as follows. 

For $k \le p+q-2$, set \[ \mathcal L^k_{p,q} = \bigoplus_{\substack{r+s=k \\ r<p,\ s<q}} \mathcal E^{r,s}, \] while for $k \ge p+q$, set \[ \mathcal L^{k-1}_{p-1,q-1} = \bigoplus_{\substack{r+s=k \\ r\geq p,\ s\geq q}} \mathcal E^{r,s}. \] The differential of $\mathcal L^\bullet_{p,q}$ is given by \[ \mathcal L^0 \xrightarrow{\;\operatorname{pr}_{\mathcal L^1}\circ d\;} \mathcal L^1 \xrightarrow{\;\operatorname{pr}_{\mathcal L^2}\circ d\;} \cdots \to \mathcal L^{k-2} \xrightarrow{\;\frac{\sqrt{-1}}{2\pi}\partial\bar\partial\;} \mathcal L^{k-1} \xrightarrow{\;d\;} \mathcal L^{k} \xrightarrow{\;d\;} \cdots . \]
Here $\mathcal E^{r,s}$ denotes the sheaf of smooth $(r,s)$-forms on $X$.
\begin{remark}
\label{topo}
Note that one may replace the sheaf of smooth differential forms in the complex \(\mathcal L^\bullet\) by the sheaf of currents. 
Since both the sheaves of smooth forms and of currents are soft, the corresponding complexes of global sections compute the Bott–Chern and Aeppli cohomology groups (with or without compact support).
The canonical inclusion of smooth forms into currents defines a quasi-isomorphism between the corresponding complexes of sheaves, and therefore induces an isomorphism on hypercohomology at the level of complex vector spaces.

In what follows, we endow Bott–Chern and Aeppli cohomology with the topology induced by the natural Fréchet topology on the space of smooth forms. 
All differentials in the complex are induced by \(d\), \(\partial\), and \(\bar\partial\), which define continuous linear operators between Fréchet spaces of smooth forms. 
However, continuity alone does not imply that these operators are homomorphisms in the sense of Serre, since their images need not be closed. 

For Bott–Chern and Aeppli cohomology with compact support, we instead endow the cohomology groups with the natural locally convex topology induced from the space of currents with compact support. 
Although the canonical inclusion of smooth forms into currents induces an isomorphism on hypercohomology as complex vector spaces, this isomorphism is in general not an isomorphism of topological vector spaces when the cohomology groups are infinite dimensional.
This discrepancy is the reason for our specific choice of topologies in the sequel.
\end{remark}

 \section{Finiteness and duality results}
In this section, we state the main results of this paper. We begin by establishing several preliminary lemmas.
  \begin{lemma} 
\label{lem1}  
Let $X$ be a strongly $r$-convex complex manifold, and let $\mathcal A^\bullet_p$ be the complex \[ \mathcal O \to \Omega^1 \to \cdots \to \Omega^{p-1} \to 0 \] starting in degree $0$. Then $H^k(X,\mathcal A^\bullet_p)$ is finite dimensional for all $k \ge p+r-1$. \end{lemma}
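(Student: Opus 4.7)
The plan is to invoke the standard hypercohomology spectral sequence of the complex of sheaves $\mathcal{A}^\bullet_p$ and combine it with the Andreotti--Grauert finiteness theorem (Theorem \ref{AG}).

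More precisely, I would use the first hypercohomology spectral sequence
\[
E_1^{i,j} = H^j\bigl(X, \mathcal{A}^i_p\bigr) \;\Longrightarrow\; \mathbb{H}^{i+j}\bigl(X, \mathcal{A}^\bullet_p\bigr),
\]
whose differential on the $E_1$-page is induced by $\partial\colon \Omega^i \to \Omega^{i+1}$. By the definition of $\mathcal{A}^\bullet_p$, the term $\mathcal{A}^i_p$ equals $\Omega^i$ for $0 \le i \le p-1$ and is zero otherwise, so each non-zero $E_1$-term is the cohomology of a coherent analytic sheaf. Since $X$ is strongly $r$-convex, Theorem \ref{AG} ensures that $H^j(X, \Omega^i)$ is finite dimensional for every $j \ge r$. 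Because the complex $\mathcal{A}^\bullet_p$ is bounded, the spectral sequence converges strongly to the hypercohomology.

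Now fix $k \ge p+r-1$ and examine the $k$-th anti-diagonal: the only possibly non-zero terms are $E_1^{i, k-i}$ with $0 \le i \le p-1$. For each such $i$, the second index satisfies $k - i \ge k - (p-1) \ge r$, so every term on this anti-diagonal is finite dimensional by Andreotti--Grauert. Since $E_\infty^{i,k-i}$ is a subquotient of $E_1^{i,k-i}$, it is again finite dimensional, and $\mathbb{H}^k(X, \mathcal{A}^\bullet_p)$ is a finite extension of the finitely many non-zero graded pieces $E_\infty^{i,k-i}$, hence finite dimensional.

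There is no real obstacle in this argument: the only point requiring any care is checking that the anti-diagonal index $k-i$ falls in the Andreotti--Grauert range $\ge r$ for every $i$ in the range $0 \le i \le p-1$, and this is precisely what the hypothesis $k \ge p+r-1$ guarantees. The boundedness of the complex makes convergence automatic, so no subtlety about passing from $E_\infty$ to the abutment arises. (If one prefers, one can avoid spectral sequences entirely and induct on $p$ by truncating $\mathcal{A}^\bullet_p$ and applying the hypercohomology long exact sequence associated with the short exact sequence of complexes $0 \to \Omega^{p-1}[-(p-1)] \to \mathcal{A}^\bullet_p \to \mathcal{A}^\bullet_{p-1} \to 0$, using Theorem \ref{AG} at each step.)
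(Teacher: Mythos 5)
Your proof is correct, but it takes a different (and somewhat more direct) route than the paper. The paper argues by induction on $p$: it peels off the top term of the complex via the short exact sequence $0 \to \Omega^{p}[p] \to \mathcal A^\bullet_{p+1} \to \mathcal A^\bullet_{p} \to 0$ and applies the long exact sequence in hypercohomology together with Theorem \ref{AG} at each step, starting from the base case $\mathcal A^\bullet_1 \simeq \mathcal O_X$. You instead run the first hypercohomology spectral sequence $E_1^{i,j}=H^j(X,\Omega^i)$ all at once and observe that for $k\ge p+r-1$ every term on the $k$-th anti-diagonal has $j=k-i\ge k-(p-1)\ge r$, hence is finite dimensional by Andreotti--Grauert; since the complex is bounded, the abutment is a finite iterated extension of subquotients of these terms. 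The two arguments are of course close cousins --- the spectral sequence is the systematic packaging of the iterated long exact sequences coming from the stupid filtration, and your closing parenthetical essentially reconstructs the paper's induction --- but your main argument gets the numerology ($k\ge p+r-1$) in one visible step rather than tracking it through an induction, while the paper's version stays at the level of elementary long exact sequences and avoids invoking spectral sequence convergence. Both are complete proofs of the stated bound.
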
 \begin{proof} 
By the Andreotti--Grauert finiteness theorem (Theorem \ref{AG}), the cohomology groups of any coherent sheaf on $X$ are finite dimensional in degrees greater than or equal to $r$. In particular, since $\mathcal A^\bullet_1 \simeq \mathcal O_X$, the statement holds for $p=1$. 

We now proceed by induction on $p$. Assume that the claim holds for $\mathcal A^\bullet_p$. Consider the short exact sequence of complexes \[ 0 \longrightarrow \Omega^{p}[p] \longrightarrow \mathcal A^{\bullet}_{p+1} \longrightarrow \mathcal A^{\bullet}_{p} \longrightarrow 0, \] where $\Omega^{p}[p]$ denotes the sheaf $\Omega^{p}$ placed in cohomological degree $p$. The associated long exact sequence in hypercohomology yields, for each $k \ge p+r$, 
\[ {H}^{k-p}(X,\Omega^{p}) \longrightarrow \mathbb{H}^k(X,\mathcal A^{\bullet}_{p+1}) \longrightarrow \mathbb{H}^k(X,\mathcal A^{\bullet}_{p}). \] Since $\Omega^{p}$ is a coherent sheaf, the group $ H^{k-p}(X,\Omega^{p})$ is finite dimensional for $k-p \ge r$, and by the induction hypothesis $\mathbb{H}^k(X,\mathcal A^{\bullet}_{p})$ is finite dimensional for $k \ge p+r$. It follows that $\mathbb{H}^k(X,\mathcal A^{\bullet}_{p+1})$ is finite dimensional for all $k \ge p+r$, which completes the induction and hence the proof.   \end{proof} 
  \begin{lemma} 
\label{lem2}    
  Let $X$ be a strongly $r$-convex complex manifold. Assume that the Betti numbers of $X$ are finite. Then $ \mathbb{H}^k(X,\mathcal B^\bullet_{p,q})$ is finite dimensional for all $k \ge \max(p,q)+r$. \end{lemma}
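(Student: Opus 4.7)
The plan is to compute $\mathbb{H}^k(X,\mathcal{B}^\bullet_{p,q})$ through the first spectral sequence of hypercohomology
\[ E_1^{a,b} = H^b(X,\mathcal{B}^a_{p,q}) \;\Longrightarrow\; \mathbb{H}^{a+b}(X,\mathcal{B}^\bullet_{p,q}), \]
and to verify that every potentially nonzero entry on the diagonal $a+b=k$ is finite dimensional whenever $k \ge \max(p,q)+r$. Since the complex $\mathcal{B}^\bullet_{p,q}$ lives in cohomological degrees $0$ through $\max(p,q)$, only finitely many columns contribute to each such diagonal, and this is enough to conclude.

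The key observation is that the degree-$0$ term of $\mathcal{B}^\bullet_{p,q}$ is the constant sheaf $(2\pi\sqrt{-1})^p \mathbb{C}$, whereas every term in positive degree is a direct sum of one or two copies of some holomorphic bundle $\Omega^j$ and is therefore coherent. Consequently the column $a=0$ of the $E_1$ page consists of the Betti groups $H^b(X,\mathbb{C})$, finite dimensional by the standing hypothesis, while for $a \ge 1$ the entries $E_1^{a,b}$ are finite dimensional as soon as $b \ge r$ by the Andreotti--Grauert finiteness theorem (Theorem~\ref{AG}). Inspecting the diagonal $a+b=k$ for $k \ge \max(p,q)+r$: at $a=0$ one has $b=k \ge r$, so the Betti group is finite; for $1 \le a \le \max(p,q)$ one has $b=k-a \ge k-\max(p,q) \ge r$, so Andreotti--Grauert applies; entries with $a > \max(p,q)$ vanish. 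Since the complex is bounded, the spectral sequence converges, each $E_\infty^{a,b}$ is a subquotient of the finite-dimensional $E_1^{a,b}$, and hence $\mathbb{H}^k(X,\mathcal{B}^\bullet_{p,q})$ is finite dimensional.

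An alternative more in the spirit of the proof of Lemma~\ref{lem1} is to begin with the short exact sequence of complexes
\[ 0 \longrightarrow \sigma_{\ge 1}\mathcal{B}^\bullet_{p,q} \longrightarrow \mathcal{B}^\bullet_{p,q} \longrightarrow \mathbb{C}[0] \longrightarrow 0, \]
where $\sigma_{\ge 1}\mathcal{B}^\bullet_{p,q}$ denotes the subcomplex obtained by replacing the degree-$0$ term by zero. The associated long exact sequence in hypercohomology absorbs the Betti contribution and reduces the problem to a bounded complex of coherent sheaves, which can be handled by induction on its length in the style of Lemma~\ref{lem1}. I do not anticipate a serious obstacle: the Betti-number assumption is precisely what is needed to handle the single non-coherent term of $\mathcal{B}^\bullet_{p,q}$, while the remaining coherent pieces are controlled by Andreotti--Grauert on the strongly $r$-convex manifold $X$. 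The only bookkeeping care required concerns the asymmetric shape of the Bott--Chern complex, with ``doubled'' terms up to degree $\min(p,q)$ and ``single'' terms between $\min(p,q)$ and $\max(p,q)$, but this does not affect the structure of the argument.
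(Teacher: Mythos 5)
Your main argument is correct and reaches the paper's conclusion by a genuinely different, though closely related, route. The paper first proves Lemma~\ref{lem1} (finiteness for the truncated complexes $\mathcal A^\bullet_p$, by induction on $p$ via short exact sequences of complexes) and then splits off the constant sheaf through the short exact sequence $0 \to (\mathcal A^\bullet_p\oplus\overline{\mathcal A^\bullet_q})[1] \to \mathcal B^\bullet_{p,q} \to (2\pi\sqrt{-1})^p\mathbb C \to 0$; your second, alternative sketch is essentially this proof, since $\sigma_{\ge 1}\mathcal B^\bullet_{p,q}$ is exactly $(\mathcal A^\bullet_p\oplus\overline{\mathcal A^\bullet_q})[1]$. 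Your primary argument instead runs the first hypercohomology spectral sequence $E_1^{a,b}=H^b(X,\mathcal B^a_{p,q})$ once and for all: it packages the paper's induction into the standard convergence statement for a bounded complex, bypasses Lemma~\ref{lem1} entirely, and makes transparent that the binding constraint is $b=k-a\ge r$ at the top column $a=\max(p,q)$, while the single non-coherent column $a=0$ is absorbed by the Betti hypothesis. Both arguments use identical inputs (Andreotti--Grauert plus finite Betti numbers) and yield the same bound $k\ge\max(p,q)+r$. Two cosmetic points: the second summand in the doubled terms is $\overline{\Omega}^j$ rather than $\Omega^j$, so its finite-dimensionality in degrees $b\ge r$ follows from Andreotti--Grauert applied to $\Omega^j$ together with complex conjugation (the paper makes the same implicit use of conjugation for $\overline{\mathcal A^\bullet_q}$); and at $a=0$ the inequality $b=k\ge r$ plays no role --- that entry is finite purely because of the Betti assumption, as you yourself note.
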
 
  \begin{proof}We consider, for $k \ge \max(p,q)+r$, the long exact sequence in hypercohomology \begin{equation}\label{eq:les} \mathbb{H}^{k-1}\!\left(X,\mathcal A^{\bullet}_p \oplus \overline{\mathcal A^{\bullet}_q}\right) \longrightarrow \mathbb{H}^{k}\!\left(X,\mathcal B^{\bullet}_{p,q}\right) \longrightarrow \mathbb{H}^{k}\!\left(X,\mathbb C(p) \right), \end{equation} associated with the short exact sequence of complexes \[ 0 \longrightarrow \bigl(\mathcal A^{\bullet}_p \oplus \overline{\mathcal A^{\bullet}_q}\bigr)[1] \longrightarrow \mathcal B^{\bullet}_{p,q} \longrightarrow \mathbb C(p):=(2\pi \sqrt{-1})^p\mathbb C \longrightarrow 0. \] Since $\mathcal A^{\bullet}_p$ and $\overline{\mathcal A^{\bullet}_q}$ satisfy the finiteness property established in Lemma \ref{lem1}, the group $\mathbb{H}^{k-1}\!\left(X,\mathcal A^{\bullet}_p \oplus \overline{\mathcal A^{\bullet}_q}\right)$ is finite dimensional for $k \ge \max(p,q)+r$. Moreover, $\mathbb{H}^{k}\!\left(X,\mathbb C(p)\right) \simeq H^{k}(X,\mathbb C)$ is finite dimensional by the topological assumption on $X$. It follows that $\mathbb{H}^{k}\!\left(X,\mathcal B^{\bullet}_{p,q}\right)$ is finite dimensional for all $k \ge \max(p,q)+r$, which completes the proof. 
  \end{proof} 
We emphasize that the assumption on the finiteness of Betti numbers is nontrivial. For instance, any non-compact Riemann surface is Stein, and hence strongly $1$-convex; nevertheless, its first Betti number may be infinite. A concrete example is provided by the complex plane $\mathbb C$ with an infinite discrete set of points removed. On the other hand, by~\cite[Theorem~5.2.7]{Hor90}, the Betti numbers of an $n$-dimensional Stein manifold satisfy $b_k = 0$ for all $k \ge n+1$.  
  
We are now in a position to state the main results.
  \begin{theorem}
  \label{thm1}
Let $X$ be a strongly $r$-convex complex manifold of dimension $n$ with finite Betti numbers, and assume that $p,q \ge r$. Then the Bott--Chern cohomology group $H^{p,q}_{BC}(X)$ is finite dimensional and is canonically isomorphic to the topological dual of $H^{n-p,n-q}_{A,c}(X)$. Similarly, the Aeppli cohomology group $H^{p,q}_{A}(X)$ is finite dimensional and is canonically isomorphic to the topological dual of $H^{n-p,n-q}_{BC,c}(X)$.
\end{theorem}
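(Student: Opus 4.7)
The plan is to realize $H^{p,q}_{BC}(X)$ as the cohomology of a complex of Fr\'echet spaces of global smooth forms, to show that its topological transpose under the integration pairing computes $H^{n-p,n-q}_{A,c}(X)$, and then to invoke Serre's criterion (Propositions~\ref{Ser1} and \ref{Ser2}) to convert form-level duality into a duality at the level of cohomology.

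Finite dimensionality is immediate. By Proposition~\ref{DemSch}, $H^{p,q}_{BC}(X)=\mathbb{H}^{p+q}(X,\mathcal{B}^\bullet_{p,q})$ and $H^{p,q}_A(X)=\mathbb{H}^{p+q+1}(X,\mathcal{B}^\bullet_{p+1,q+1})$. The hypothesis $p,q\ge r$ gives $p+q\ge\max(p,q)+r$ and $p+q+1\ge\max(p+1,q+1)+r$, so both groups are finite dimensional by Lemma~\ref{lem2}. The same bound also yields the finite dimensionality of $\mathbb{H}^{p+q+1}(X,\mathcal{B}^\bullet_{p,q})$, which is what will make the subsequent differential a homomorphism in the Serre step.

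Next I would set up the Fr\'echet model. By Remark~\ref{topo}, the global sections of the soft resolution $\mathcal{L}^\bullet_{p,q}$ by smooth forms form a cochain complex of Fr\'echet spaces whose cohomology at the distinguished degree is $H^{p,q}_{BC}(X)$ with its stated topology; dually, the analogous complex built from compactly supported currents computes $H^{n-p,n-q}_{A,c}(X)$ with the topology from Remark~\ref{topo}. Under the integration pairing the topological dual of the Fr\'echet space $\mathcal{A}^{r,s}(X)$ is identified with the space of compactly supported currents of bidegree $(n-r,n-s)$; a direct integration-by-parts computation (justified by the compact support of the currents) shows that the transposes of the differentials $d$ and $\partial\bar\partial$ are, up to explicit signs, again $d$ and $\partial\bar\partial$ on currents. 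A careful check of the two halves of the $\mathcal{L}^\bullet$-complex—the bidegrees $(r,s)$ with $r<p,\,s<q$ on one side and those with $r\ge p,\,s\ge q$ on the other—then identifies the topological transpose of the Fr\'echet complex computing $H^{p,q}_{BC}(X)$ with the compactly-supported-current complex computing $H^{n-p,n-q}_{A,c}(X)$.

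To conclude, I would apply Serre's criterion to the three-term piece $L\xrightarrow{u}M\xrightarrow{v}N$ of the Fr\'echet complex that realizes $H^{p,q}_{BC}(X)=\ker v/\operatorname{Im} u$. Finite dimensionality of this quotient implies that $\operatorname{Im} u$ has finite codimension in the closed subspace $\ker v\subset M$, so by Proposition~\ref{Ser2} and the open mapping theorem for Fr\'echet spaces, $u$ is a homomorphism; finite dimensionality of the cohomology at the next position similarly yields that $v$ is a homomorphism. Proposition~\ref{Ser1} then produces a canonical isomorphism between the topological dual of $H^{p,q}_{BC}(X)$ and the cohomology of the transposed complex, which by the previous step is $H^{n-p,n-q}_{A,c}(X)$. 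Finite dimensionality upgrades this to a duality in both directions, and the second assertion of the theorem follows from the same argument applied to $\mathcal{B}^\bullet_{p+1,q+1}$ in place of $\mathcal{B}^\bullet_{p,q}$. The main obstacle will be the bookkeeping in identifying the transposed complex with the Aeppli complex for the complementary bidegree—verifying that signs, degree shifts, and the position of the distinguished $\partial\bar\partial$-term align correctly between the smooth-form and compactly-supported-current models; once this is secured, the functional-analytic step is essentially formal, given the finite-dimensional input from Lemma~\ref{lem2}.
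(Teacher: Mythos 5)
Your proposal is correct and follows essentially the same route as the paper: realize $H^{p,q}_{BC}(X)$ as the cohomology of the Fr\'echet complex of global sections of $\mathcal L^\bullet_{p,q}$, use Lemma~\ref{lem2} (via the bound $p+q\ge\max(p,q)+r$) for finite dimensionality, upgrade the differentials to homomorphisms via Proposition~\ref{Ser2}, and conclude with Proposition~\ref{Ser1} and the integration pairing identifying the transposed complex with the compactly supported current model of $H^{n-p,n-q}_{A,c}(X)$. If anything, you are slightly more explicit than the paper in noting that the finite dimensionality of $\mathbb H^{p+q+1}(X,\mathcal B^\bullet_{p,q})$ is what makes the \emph{subsequent} differential $v$ a homomorphism, which is indeed required by Proposition~\ref{Ser1}.
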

\begin{proof} 
We prove the Bott–Chern statement; the Aeppli case is analogous.
By Proposition \ref{DemSch} and Remark \ref{topo}, \(H^{p,q}_{BC}(X)\) is computed as the cohomology of the complex of global sections of \(\mathcal L^{\bullet}_{p,q}\), which is a complex of Fréchet spaces with continuous differentials. By Lemma \ref{lem2}, under the assumptions \(p,q\ge r\), the cohomology group \(H^{p,q}_{BC}(X)\) is finite dimensional. Hence the image of the preceding differential in \(\mathcal L^{\bullet}_{p,q}(X)\) has finite codimension in its kernel. Since kernels of continuous linear maps between Fréchet spaces are closed, Proposition \ref{Ser2} implies that this differential is a homomorphism in the sense of Serre. Therefore the hypotheses of Proposition \ref{Ser1} are satisfied for the relevant three-term subcomplex of \(\mathcal L^{\bullet}_{p,q}(X)\), and the topological dual of \(H^{p,q}_{BC}(X)\) is canonically identified with the corresponding dual cohomology group. Under the integration pairing between forms and currents, this dual group is \(H^{n-p,n-q}_{A,c}(X)\).
Note that in this case the cohomology groups are finite dimensional; consequently, the canonical inclusion of smooth forms into currents induces an isomorphism not only of complex vector spaces but also of topological vector spaces.
\end{proof} 
In the case $r=1$, a stronger duality statement can be obtained, as follows.
\begin{theorem}
\label{thm2}
Let $X$ be a strongly $1$-convex complex manifold of dimension $n$. Assume that the Betti numbers of $X$ are finite. Then, for all $p,q$, $H^{p,q}_{BC}(X)$ is the topological dual of $H^{n-p,n-q}_{A,c}(X)$, and $H^{p,q}_A(X)$ is the topological dual of $H^{n-p,n-q}_{BC,c}(X)$.
\end{theorem}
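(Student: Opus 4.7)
My plan is to reduce the assertion, when $p,q\geq 1$, to Theorem \ref{thm1} applied with $r=1$, and to treat the remaining boundary cases $p=0$ or $q=0$ by a variant of the same argument in which the finite-dimensionality required to invoke Proposition \ref{Ser2} is obtained one position \emph{above} the Bott--Chern position in $\mathcal L^\bullet_{p,q}(X)$, rather than at the Bott--Chern position itself. By the symmetry between $(p,q)$ and $(q,p)$ under conjugation, and by the symmetry between the Bott--Chern and Aeppli statements, it will suffice to treat $p=0$ for the Bott--Chern duality.

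When $p=0$, the ``lower'' part of the complex $\mathcal L^\bullet_{0,q}$ is trivial, so the global-section complex begins at $\mathcal L^{q-1}_{0,q}(X)=\mathcal A^{0,q}(X)$ with first differential $d=\partial+\bar\partial$ taking values in $\mathcal L^{q}_{0,q}(X)=\mathcal A^{1,q}(X)\oplus \mathcal A^{0,q+1}(X)$, and $H^{0,q}_{BC}(X)$ is canonically identified with $\ker d$ inside this Fréchet space. This group is typically infinite-dimensional, so the argument of Theorem \ref{thm1}, which uses finite-dimensionality of the cohomology at the Bott--Chern position, does not apply directly.

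The key step I would establish is that the cohomology of $\mathcal L^\bullet_{0,q}(X)$ at the next position $\mathcal L^{q}_{0,q}(X)$ is nonetheless finite-dimensional. For this I invoke the same long exact sequence as in Lemma \ref{lem2}, coming from the short exact sequence $0\to \overline{\mathcal A^\bullet_q}[1]\to \mathcal B^\bullet_{0,q}\to \mathbb C\to 0$, but one degree further: the outer terms $\mathbb H^{q}(X,\overline{\mathcal A^\bullet_q})$ and $H^{q+1}(X,\mathbb C)$ are finite-dimensional, the first by Lemma \ref{lem1} applied with $r=1$ (the threshold $q\geq q$ holds), and the second by the finite Betti hypothesis. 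Hence $\mathbb H^{q+1}(X,\mathcal B^\bullet_{0,q})$ is finite-dimensional, and via the quasi-isomorphism $\mathcal L^\bullet_{0,q}\simeq \mathcal B^\bullet_{0,q}$ this coincides with the cohomology of $\mathcal L^\bullet_{0,q}(X)$ at $\mathcal L^{q}_{0,q}(X)$. Consequently the image of $d:\mathcal A^{0,q}(X)\to \mathcal A^{1,q}(X)\oplus \mathcal A^{0,q+1}(X)$ has finite codimension in the closed Fréchet subspace given by the kernel of the next differential, so Proposition \ref{Ser2} makes $d$ a Serre homomorphism. Proposition \ref{Ser1} applied to the three-term subcomplex $0\to \mathcal A^{0,q}(X)\xrightarrow{d}\mathcal A^{1,q}(X)\oplus \mathcal A^{0,q+1}(X)$ then identifies the topological dual of $H^{0,q}_{BC}(X)=\ker d$ with the cokernel of the transposed differential $^{t}d$ acting on the corresponding spaces of compactly supported currents. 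Under the integration pairing this cokernel is, by definition, $H^{n,n-q}_{A,c}(X)$ equipped with the topology inherited from currents (the convention fixed in Remark \ref{topo}). Reflexivity of the nuclear Fréchet space $H^{0,q}_{BC}(X)$ then yields the dual identification in both directions as stated.

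The main obstacle I expect is purely bookkeeping: keeping careful track of the degree shift between $\mathcal B^\bullet_{p,q}$ and $\mathcal L^\bullet_{p,q}$, and checking that the cokernel of $^{t}d$ is canonically $H^{n,n-q}_{A,c}(X)$ rather than some compactly supported Dolbeault variant. The conceptual point, however, is short: although the Bott--Chern group at a boundary position $p=0$ (respectively $q=0$) is no longer finite-dimensional, finiteness persists one degree later in $\mathcal L^\bullet_{0,q}(X)$ (respectively $\mathcal L^\bullet_{p,0}(X)$), which is exactly what Proposition \ref{Ser2} needs in order to feed into Serre's duality criterion.
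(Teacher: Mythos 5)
Your treatment of the Bott--Chern boundary case is essentially the paper's own argument: finiteness of $\mathbb H^{q+1}(X,\mathcal B^\bullet_{0,q})$ (Lemma~\ref{lem2} applied one degree above the Bott--Chern position) makes the outgoing differential a Serre homomorphism via Proposition~\ref{Ser2}, the incoming differential is zero, and Proposition~\ref{Ser1} then identifies the dual of $H^{0,q}_{BC}(X)$ with $H^{n,n-q}_{A,c}(X)$ computed with compactly supported currents. That half is correct and matches the paper.

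The gap is your claim that ``the symmetry between the Bott--Chern and Aeppli statements'' reduces the Aeppli duality to the Bott--Chern one. No such symmetry exists: conjugation exchanges $(p,q)$ with $(q,p)$ but does not exchange $H_{BC}$ with $H_A$, and the two assertions of the theorem pair different groups ($H_{BC}$ with $H_{A,c}$, versus $H_A$ with $H_{BC,c}$), so neither follows from the other by reflexivity. Concretely, for $H^{p,0}_A(X)$ with $p\ge 1$ the relevant three-term Fr\'echet subcomplex of $\mathcal L^\bullet_{p+1,1}(X)$ is
\[
\mathcal E^{p-1,0}(X)\xrightarrow{\;\partial\;}\mathcal E^{p,0}(X)\xrightarrow{\;\partial\bar\partial\;}\mathcal E^{p+1,1}(X),
\]
and while the outgoing map is handled by finiteness of $\mathbb H^{p+2}(X,\mathcal B^\bullet_{p+1,1})$ exactly as in your Bott--Chern argument, the incoming map is $\partial$, not zero, and you must separately show that it is a homomorphism in Serre's sense before Proposition~\ref{Ser1} can be invoked. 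The paper does this by observing that the middle cohomology of $\mathcal E^{p-1,0}(X)\to\mathcal E^{p,0}(X)\to\mathcal E^{p+1,0}(X)$ is $\overline{H^{p}(X,\mathcal O_X)}$, which is finite dimensional for $p\ge 1$ on a strongly $1$-convex manifold by Theorem~\ref{AG}; hence $\operatorname{Im}\partial$ has finite codimension in the closed subspace $\ker\partial\subset\mathcal E^{p,0}(X)$ and Proposition~\ref{Ser2} applies. This step uses the $1$-convexity hypothesis in an essential way and is entirely absent from your proposal; without it the Aeppli half of the theorem in the boundary cases $p=0$ or $q=0$ remains unproved.
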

\begin{proof} 
By Theorem \ref{thm1}, the only remaining cases are $p=0$ or $q=0$. By complex conjugation symmetry, it suffices to consider $q=0$. 

In the Bott--Chern case, Lemma~\ref{lem2} implies that $\mathbb{H}^{p+1}(X,\mathcal{B}^\bullet_{p,0})$ is finite dimensional. In order to apply Proposition~\ref{Ser1}, it remains to verify that the relevant morphism in the complex $\mathcal L^\bullet$ is a homomorphism. In this case, the morphism reduces to \[ 0 \longrightarrow \mathcal E^{p,0}(X), \] which is the zero map and hence trivially a homomorphism
since the induced map to its image is open (as the image is \(\{0\}\)).
  
  In the Aeppli case with $p \ge 1$, Lemma~\ref{lem2} implies that $\mathbb{H}^{p+2}(X,\mathcal{B}^\bullet_{p+1,1})$ is finite dimensional. In order to apply Proposition~\ref{Ser1}, it remains to verify
 that the relevant morphism in the complex $\mathcal L^\bullet$ is a homomorphism. In this case, the morphism reduces to \[\mathcal  E^{p-1,0}(X) \xrightarrow{\partial}\mathcal  E^{p,0}(X). \]
 Consider the complex \[\mathcal  E^{p-1,0}(X) \xrightarrow{\partial}\mathcal  E^{p,0}(X) \xrightarrow{\partial}\mathcal  E^{p+1,0}(X). \] 
Its cohomology at the middle term by complex conjugation is given by \[ H^{p,0}_{\partial}(X) = \overline{H^{0,p}_{\bar\partial}(X)} = \overline{H^{p}(X,\mathcal O_X)}, \] which is finite dimensional for $p \ge 1$ by Theorem \ref{AG}. 
In particular, the finite dimensionality of this cohomology group implies that $\operatorname{Im}(\partial)$ has finite codimension in $\ker(\partial) \subset \mathcal E^{p,0}(X)$.
Since $\ker(\partial)$ is closed in $\mathcal E^{p,0}(X)$, it is a Fr\'echet space. Moreover, for $p \ge 1$, the image $\operatorname{Im}(\partial)$ has finite codimension in $\mathcal E^{p,0}(X)$ and is therefore also a Fr\'echet space with the topology induced from $\mathcal E^{p,0}(X)$. Consequently, $\partial$ defines a homomorphism in the sense of Serre, and Proposition~\ref{Ser1} applies, which completes the proof for the case $p \ge 1$.
 
In the Aeppli case with $p=0$, the morphism in the complex $\mathcal L^\bullet$ that needs to be verified in order to apply Proposition~\ref{Ser1} reduces to the zero map, which is trivially a homomorphism. \end{proof} 
\begin{remark}
By the works of \cite{Ume} and \cite{Kaz84},
all de Rham and Dolbeault cohomologies of a theta toroidal group are finite dimensional.
By the same arguments as above,
one can show that the topological duality between Bott--Chern cohomology with compact support and Aeppli cohomology
and the topological duality between Aeppli cohomology with compact support and Bott--Chern cohomology hold since all relevant Fréchet maps are homomorphisms because all cohomology groups involved are finite dimensional.
\end{remark}
As already observed in \cite[Section 14]{Ser55}, in general there is no duality between Dolbeault cohomology and Dolbeault cohomology with compact support. An analogous phenomenon is expected for Bott–Chern and Aeppli cohomologies. 
In the following, by refining Serre’s calculations, we present two counterexamples showing, respectively, that there is no duality between Bott–Chern cohomology with compact support and Aeppli cohomology, and between Bott–Chern cohomology and Aeppli cohomology with compact support. 
The construction is based on the hypercohomological descriptions of Bott--Chern and Aeppli cohomology and on a detailed analysis of the Hodge--to--de~Rham spectral sequence, refining the arguments in \cite[Section~14]{Ser55}.
For the convenience of the reader, we provide more details than in \cite[Section 14]{Ser55}, although these considerations are well known to experts.

\section{Counterexamples}

We recall the following calculation of Dolbeault cohomology, with and without compact support, due to Serre. The same arguments apply to $X$ the complement in $\C^2$ of any non-compact connected closed subset that is not a domain of holomorphy and contains no $2$-dimensional compact irreducible component. In what follows, we choose different closed subsets with simpler homotopy types in order to facilitate the analysis of the Hodge--to--de~Rham spectral sequence and to exhibit two distinct types of failure of duality.

We begin with ordinary Dolbeault cohomology. Since the sheaves of holomorphic differential forms are locally free, it suffices to determine the cohomology groups  \[  H^1(X,\mathcal O_X)\quad\text{and}\quad H^2(X,\mathcal O_X).  \]
First, by a theorem of Malgrange (see, for example, \cite[Corollary~(4.15), Chapter~IX]{agbook}), one has  \[  H^{2}(X,\mathcal O_X)=0.  \]

We also recall the following topological criterion theorem of domain of holomorphy from \cite[Theorem~14, Section~G]{Gun90}.

\textit{If $D$ is an open subset of $\C^n$ for which $H^{0,q}(D)=0$ for $1 \leq q \leq n-1$, then $D$ is a domain of holomorphy.}

In the present situation, this theorem implies that $H^1(X,\mathcal O_X)\neq 0$.

We now determine the Dolbeault cohomology with compact support.
Since \(X\) is non-compact, we have \[ H^0_c(X,\mathcal O_X) =H^0_c(X,\Omega^1_X) =H^0_c(X,\Omega^2_X) =0. \] Following \cite{Ser55}, consider the long exact sequence of cohomology with compact support associated with the decomposition \(\C^2 = X \cup (\C^2\setminus X)\): \[ H^0_c(\C^2\setminus X,\mathcal O) \longrightarrow H^1_c(X,\mathcal O_X) \longrightarrow H^1_c(\C^2,\mathcal O). \] Since \(\C^2\setminus X\) is non-compact connected, we have \(H^0_c(\C^2\setminus X,\mathcal O)=0\), and since \(\C^2\) is Stein, \[ H^1_c(\C^2,\mathcal O)=0, \]
by \cite[Théorème~3]{Ser55}. 
It follows that \[ H^1_c(X,\mathcal O_X)=0. \] 
The same conclusion holds for holomorphic differential forms, and hence \[ H^1_c(X,\mathcal O_X) =H^1_c(X,\Omega^1_X) =H^1_c(X,\Omega^2_X) =0. \]
Now we choose two different closed subsets to exhibit two distinct types of failure of duality.
\subsection{The complement of a real line}
Consider \(X\subset \C^2\) to be the complement of a real line. Then \(X\) is homotopy equivalent to \(S^2\). In particular, its Betti numbers are \[ b_0=1,\qquad b_1=0,\qquad b_2=1,\qquad b_3=0,\qquad b_4=0. \] 
By Poincaré duality, the corresponding Betti numbers with compact support are determined accordingly.

We now verify that \(X\) is not a domain of holomorphy. In fact, $ H^1(X,\mathcal O_X) $ is infinite dimensional as the Serre duality fails for $X$ (cf. \cite[Th\'eor\`eme 2, Proposition 6, Section 14]{Ser55}).

By the solution of the Levi problem (see, for example, \cite[Theorems~(6.11), (7.2) in Chapter~I and Theorem~(9.11) in Chapter~VIII]{agbook}), pseudoconvexity is equivalent to being a domain of holomorphy. Thus, to show that \(X\) is not a domain of holomorphy, it suffices to examine the Levi form of a defining exhaustion function. 
In this case, we consider the distance function to the boundary,
 \[  \phi(z_1,z_2)=-\log\bigl(x_1^2+|z_2|^2\bigr),  \qquad z_1=x_1+\sqrt{-1} y_1,\ z_2\in\C,  \] 
 and set  \[  \rho:=x_1^2+|z_2|^2  \]
such that the real line is defined by $\{x_1=z_2=0\}$. 
Recall that \[ \frac{\partial}{\partial z_1} =\tfrac12\Bigl(\frac{\partial}{\partial x_1} -\sqrt{-1}\frac{\partial}{\partial y_1}\Bigr), \qquad \frac{\partial}{\partial\bar z_1} =\tfrac12\Bigl(\frac{\partial}{\partial x_1} +\sqrt{-1}\frac{\partial}{\partial y_1}\Bigr). \] 
Since \(\rho\) is independent of \(y_1\), \[ \partial_{z_1}\rho=\partial_{\bar z_1}\rho=x_1, \qquad \partial_{z_2}\rho=\bar z_2, \quad \partial_{\bar z_2}\rho=z_2. \] 
The first derivatives of \(\phi\) are \[ \partial_{z_1}\phi=-\frac{x_1}{\rho}, \qquad \partial_{z_2}\phi=-\frac{\bar z_2}{\rho}. \] 
The second derivatives are computed as follows: \begin{align*} \phi_{1\bar1} &=\partial_{\bar z_1}\!\left(-\frac{x_1}{\rho}\right) =-\frac12\Bigl(\frac{1}{\rho}-\frac{2x_1^2}{\rho^2}\Bigr) =\frac{x_1^2-|z_2|^2}{2\rho^2},\\[4pt] \phi_{1\bar2} &=\partial_{\bar z_2}\!\left(-\frac{x_1}{\rho}\right) =\frac{x_1 z_2}{\rho^2},\\[4pt] \phi_{2\bar1} &=\partial_{\bar z_1}\!\left(-\frac{\bar z_2}{\rho}\right) =\frac{x_1\bar z_2}{\rho^2},\\[4pt] \phi_{2\bar2} &=\partial_{\bar z_2}\!\left(-\frac{\bar z_2}{\rho}\right) =-\frac{1}{\rho}+\frac{|z_2|^2}{\rho^2} =-\frac{x_1^2}{\rho^2}. \end{align*} 
Therefore, the complex Hessian matrix \((\phi_{j\bar k})\) is \[ (\phi_{j\bar k}) =\frac{1}{\rho^2} \begin{pmatrix} \dfrac{x_1^2-|z_2|^2}{2} & x_1 z_2\\ x_1\bar z_2 & -x_1^2 \end{pmatrix} \] 
with $\det(\phi_{j\bar k}) = -\frac{x_1^2}{2\rho^3}$.
In particular, the Levi form is not semi-positive everywhere; hence \(X\) is not pseudoconvex and hence not a domain of holomorphy.

We now return to the computation of Bott–Chern and Aeppli cohomology. In particular, we focus on the groups 
 \[  H^{1,1}_{A,c}(X)  \quad\text{and}\quad  H^{1,1}_{BC}(X).  \]

By the long exact sequence relating Bott–Chern cohomology to de Rham and Dolbeault cohomology (\ref{eq:les}), 
\[ H^{1}(X,\C) \longrightarrow H^{1}(X,\mathcal O_X\oplus \overline{\mathcal O}_X) \longrightarrow H^{1,1}_{BC}(X) \longrightarrow H^{2}(X,\C) \longrightarrow H^{2}(X,\mathcal O_X\oplus \overline{\mathcal O}_X), \] 
and using 
\[ H^{1}(X,\C)=0, \qquad H^{2}(X,\mathcal O_X)=0, \]
 it follows that 
 \[ \dim H^{1,1}_{BC}(X)\ge \dim H^{2}(X,\C)=1, \] 
 and in fact \[ \dim H^{1,1}_{BC}(X)\ge 3, \] since 
 \(H^{1}(X,\mathcal O_X)\neq 0\) (which is in fact infinite dimensional). 
 
 Next, consider Aeppli cohomology with compact support. From the long exact sequence 
 \[ \mathbb H^{2}_c(X,\mathcal O_X\to\Omega^1_X) \oplus \mathbb H^{2}_c(X,\overline{\mathcal O}_X\to\overline{\Omega}^1_X) \longrightarrow H^{1,1}_{A,c}(X) \longrightarrow H^{3}_c(X,\C), \] 
 and the fact that \(H^{3}_c(X,\C)=0\), we obtain a surjection onto \(H^{1,1}_{A,c}(X)\). 
 
 To estimate the hypercohomology term, consider the exact sequence 
 \[ H^{1}_c(X,\Omega^1_X) \longrightarrow \mathbb H^{2}_c(X,\mathcal O_X\to\Omega^1_X) \longrightarrow H^{2}_c(X,\mathcal O_X) \longrightarrow H^{2}_c(X,\Omega^1_X). \] 
 Since \(H^{1}_c(X,\Omega^1_X)=0\), this yields 
 \[ \mathbb H^{2}_c(X,\mathcal O_X\to\Omega^1_X) = \ker\!\bigl( H^{2}_c(X,\mathcal O_X) \to H^{2}_c(X,\Omega^1_X) \bigr). \] 
 
 Now consider the Hodge–to–de Rham spectral sequence with compact support. 
By bidegree reasons, all differentials \(d_r\) vanish for \(r\ge 3\) since \( \dim X = 2 \). In particular, the only potentially nonzero differential affecting \(E^{0,2}\) is \(d_2\). Moreover, 
 \[ \mathbb H^{2}_c(X,\mathcal O_X\to\Omega^1_X) = E^{0,2}_2. \] 
 Since \[ E^{2,1}_1 = H^1_c(X,\Omega^2_X)=0, \] we have \(E^{2,1}_2=0\), and hence the differential \[ d_2\colon E^{0,2}_2\to E^{2,1}_2 \] vanishes.
  Therefore, \[ E^{0,2}_2=E^{0,2}_\infty \] injects into \(H^{2}_c(X,\C)\), which is one-dimensional. 
  It follows that \[ \dim H^{1,1}_{A,c}(X)\le 2. \] Comparing dimensions, we conclude that \(H^{1,1}_{A,c}(X)\) and \(H^{1,1}_{BC}(X)\) cannot be topological duals of each other.

\subsection{The complement of a real plane}
We now present a variant of Serre’s example that complements the preceding discussion. While in the previous paragraph we considered Bott--Chern cohomology without compact support and Aeppli cohomology with compact support, the present example exhibits the opposite phenomenon. 

Let $X=\C^2\setminus\R^2$, where $\R^2\subset\C^2$ is the standard totally real plane. Then $X$ is homotopy equivalent to $S^1$, and hence its Betti numbers are \[ b_0=1,\qquad b_1=1,\qquad b_2=b_3=b_4=0. \] This domain can be written as the tube domain \[ X=(\R^2\setminus\{0\})+\sqrt{-1}\R^2. \] 
Since $\R^2\setminus\{0\}$ is not convex, Bochner’s tube theorem (see, for example, \cite[Theorem 2.5.10]{Hor90}) implies that $X$ is not a domain of holomorphy. 

In contrast to the previous example, we find that the Bott--Chern cohomology with compact support vanishes in bidegree $(1,1)$, whereas the corresponding Aeppli cohomology without compact support is nontrivial.

 Indeed, from the long exact sequence \[ \cdots \to H^{1}_c(X,\mathcal O_X\oplus\overline{\mathcal O}_X) \to H^{1,1}_{BC,c}(X) \to H^2_c(X,\C)\to\cdots, \] together with the vanishing \[ H^{1}_c(X,\mathcal O_X)=0, \qquad H^2_c(X,\C)=0, \] we deduce \[ H^{1,1}_{BC,c}(X)=0. \] 
 
 On the other hand, as in the preceding paragraph, Aeppli cohomology can be described in terms of hypercohomology of the complex $\mathcal O_X\to\Omega^1_X$. 
 
By the long exact sequence associated with Aeppli cohomology, \[ H^2(X,\C) \longrightarrow \mathbb H^{2}(X,\mathcal O_X \to \Omega^1_X) \oplus \mathbb H^2(X, \overline{\mathcal O}_X \to \overline{\Omega}^1_X) \longrightarrow H^{1,1}_{A}(X) \longrightarrow H^3(X,\C), \] and using the fact that \[ H^2(X,\C)=0, \qquad H^3(X,\C)=0, \] we obtain an isomorphism \[ H^{1,1}_{A}(X) \simeq \mathbb H^{2}(X,\mathcal O_X \to \Omega^1_X) \oplus \mathbb H^2(X, \overline{\mathcal O}_X \to \overline{\Omega}^1_X). \]

Next, consider the long exact sequence in hypercohomology associated with the complex \(\mathcal O_X \to \Omega^1_X\): \[ H^{1}(X,\mathcal O_X) \longrightarrow  H^{1}(X,\Omega^1_X) \longrightarrow \mathbb H^{2}(X,\mathcal O_X \to \Omega^1_X) \longrightarrow  H^{2}(X,\mathcal O_X). \] 
Since $H^{2}(X,\mathcal O_X)=0$, it follows that \[ \mathbb H^{2}(X,\mathcal O_X \to \Omega^1_X) = \mathrm{Coker}\bigl( H^{1}(X,\mathcal O_X) \to H^{1}(X,\Omega^1_X) \bigr). \] 

We now consider the Hodge--to--de~Rham spectral sequence associated with the Dolbeault complex of \(X\). By bidegree reasons, all differentials \(d_r\) vanish for \(r \ge 3\), so the spectral sequence degenerates at the \(E_3\)-page. Moreover, $ E_2^{1,1} = E_\infty^{1,1}, $ and this group injects as a subspace of \(H^2(X,\C)\). 
Since \(H^2(X,\C)=0\), it follows that \[ E_2^{1,1}=E_\infty^{1,1}=0. \] 
Thus we have the identification 
\[ \mathrm{Im}\bigl( H^{1}(X,\mathcal O_X) \to H^{1}(X,\Omega^1_X) \bigr) = \mathrm{Ker}\bigl( H^{1}(X,\Omega^1_X) \to H^{1}(X,\Omega^2_X) \bigr). \]
 Consequently, 
 \begin{align*} \mathrm{Coker}\bigl( H^{1}(X,\mathcal O_X) \to H^{1}(X,\Omega^1_X) \bigr) &= H^{1}(X,\Omega^1_X)\Big/\mathrm{Ker}\bigl( H^{1}(X,\Omega^1_X) \to H^{1}(X,\Omega^2_X) \bigr)\\ &\simeq \mathrm{Im}\bigl( H^{1}(X,\Omega^1_X) \to H^{1}(X,\Omega^2_X) \bigr). \end{align*} 
On the other hand, by definition of the \(E_1\)-page, \[ E_1^{0,2}=H^2(X,\mathcal O_X)=0, \] and hence $ E_2^{0,2}=0. $ 
Therefore the differential \[ d_2 \colon E_2^{0,2} \longrightarrow E_2^{2,1} \] vanishes identically, and we obtain \[ E_2^{2,1}=E_3^{2,1}=E_\infty^{2,1}=0. \]
 Finally, since \[ E_1^{2,1}=H^{1}(X,\Omega^2_X)\neq 0, \] and since no further differentials can kill this term, we have \[ \mathrm{Im}\bigl( H^{1}(X,\Omega^1_X) \to H^{1}(X,\Omega^2_X) \bigr) = \mathrm{Ker}\bigl( H^{1}(X,\Omega^2_X) \to H^{1}(X,\Omega^3_X) \bigr) = H^{1}(X,\Omega^2_X)\neq 0. \] Therefore, \[ H^{1,1}_{A}(X)\neq 0. \]
  
Department of Mathematics, Graduate School of Science, Osaka Metropolitan University, 3-3-138 Sugimoto, Osaka 558-8585, Japan.\\
Email address: xiaojun.wu@univ-cotedazur.fr; y25161q@omu.ac.jp.  
  \end{document}